\documentclass[12pt]{article}
\input epsf.tex
\usepackage{amsthm, amssymb, amsmath}
\newtheorem{Theorem}{Theorem}
\newtheorem{lemma}{Lemma}

\begin{document}
\title{\bf{Relationships between the delayed chemostat and the delayed logistic equation}}
\author{Torsten Lindstr\"{o}m \\
Department of Mathematics \\
Linnaeus University \\
SE-35195 V\"{a}xj\"{o}, SWEDEN}
\date{}
\maketitle

\begin{abstract}
The logistic equation is often considered as a simplification of the chemostat (Kooi, Boer, Kooijman (1998)\nocite{Kooi.BoMB:60}). However, the global qualitative properties of the delayed single species chemostat are completely known, see e. g. Smith (2011)\nocite{smith_delays}. Such properties still remain open for the delayed logistic equation, see e. g. B{\'{a}}nheley, Czendes, Kristin, and Neymaier (2014)\nocite{Banhelyi.SIAMJADS:29} despite that they have been announced a long time ago (Wright (1955)\nocite{wright.JfRAM:194}). We may therefore ask whether the logistic equation really is a simplification and what information about the chemostat actually is contained. We discuss the links between these equations and conclude that they are less clear in the delayed case than they are in the non-delayed case. We keep the presentation as elementary as possible.
\end{abstract}

\section{Introduction}

In this paper we discuss the relation between the delayed chemostat
\begin{eqnarray}
S^\prime(T)&=&CD-DS-\frac{AS(T)}{1+ABS(T)}X(T),\nonumber\\
X^\prime(T)&=&Me^{-DR}\frac{AS(T-R)}{1+ABS(T-R)}X(T-R)-DX(T).\label{delay_chemostat}
\end{eqnarray}
and various forms of the delayed logistic equation that are going to be introduced later. Here, $S(T)$ is the substrate concentration at time $T$, $X(T)$ is the concentration of some organism at time $T$ having the
substrate $S$ as its limiting resource. The parameters $C>0$, $D>0$, $A>0$, $B>0$, $M>0$, $R>0$ stand for concentration, dilution rate, search rate for the organism, handling time for the organism (cf. Holling (1959)\nocite{Holling.CanEnt:91}), conversion factor for the organism, and time delay, respectively. The time delay could be included in many ways in the chemostat model, but we think that the above way is the most natural way to include it; in this way it takes into account that the organism cannot convert consumption of the substrate into own biomass instantaneously. Note the factor $e^{-DR}$, it corresponds to the fact that a part of the individuals that consume nutrient is washed out before they are able to reproduce.

The model above is a phenomenological model for a number of ecological phenomena, cf. Smith and Waltman (1995)\nocite{smithchemo}. For instance, a lake ecology may have rivers transferring a limiting resource to the lake and rivers diluting this resource.

We start our analysis by re-parameterizing our model into a dimensionless form by the changes $t=DT$, $r=DR$, $s=S/C$, $x=X/C$, $a=AC/D$, $b=ABC$, and $m=M$. For simplicity, we introduce the function $f$ defined by
\begin{displaymath}
f(s)=\frac{as}{1+bs},
\end{displaymath}
too. We note that $f$ is strictly increasing with $f(0)=0$. We arrive in the dimensionless form
\begin{eqnarray}
\dot{s}(t)&=&1-s(t)-f(s(t))x(t),\nonumber\\
\dot{x}(t)&=&me^{-r}f(s(t-r))x(t-r)-x(t)\label{delay_chemostat_dim_less}
\end{eqnarray}
for which we impose the initial conditions $s(t)\geq 0$, $-r\leq t\leq 0$ and $x(t)\geq 0$, $-r\leq t<0$, $x(0)>0$. A number of important results regarding the model (\ref{delay_chemostat_dim_less}) has been proved and can be found in Smith (2011)\nocite{smith_delays}. They are that the solution space is $C([-r,0],{\bf{R}}^2)$, solutions remain non-negative (Theorem 3.4), and bounded (Section 5.6). Either one or two equilibria, the washout state and the survival state exist, the washout state is globally attracting if it is the only equilibrium and so is the survival state it it exists (Theorem 5.16). The first proof of the global stability result can be found in Ellermeyer (1994)\nocite{SIAMJAM.Ellermeyer}. Thus, all global qualitative dynamical properties of the single species chemostat model are known.

\section{Methods}

We briefly go through the main methods that are going to be used in this paper. Most of this material can be found in the monographs of Hale and Lunel (1993)\nocite{delays}, Smith (1995)\nocite{Smith.monotone}, and Smith (2011)\nocite{smith_delays}. It turns out that (\ref{delay_chemostat_dim_less}) can be reduced to a scalar equation, so we work throughout the paper with scalar autonomous delay differential equations of the form
\begin{equation}
\dot{x}(t)=f(x(t),x(t-r))
\label{general_scalar_autonomous}
\end{equation}
satisfying an initial condition $x(t)=\phi(t)$, $-r\leq t\leq 0$. The solution of (\ref{general_scalar_autonomous}) starting at the initial condition $\phi\in C([-r,0],{\bf{R}})$ is denoted by $x(t,\phi)$. We denote the right hand side by $f(x,y)$ when we want to refer to the different variables or take partial derivatives. Existence and uniqueness of the solutions of such equations are granted by the method of steps if $f$ and $f_x$ are continuous on ${\bf{R}}^2$ and $\phi$ is continuous on $-r\leq t\leq 0$ since the equation becomes an ODE after substituting the continuous initial condition into the equation. Hence, standard ODE theory applies and grants existence and uniqueness of solutions of equations of form (\ref{general_scalar_autonomous}). It follows that the natural state space of the equation (\ref{general_scalar_autonomous}) is the continuous functions on the interval $-r\leq t\leq 0$, or $C([-r,0],{\bf{R}})$.

The equations that we are going to consider arise in population dynamics and we are going to use a theorem granting that non-negative solutions remain non-negative. Such theorems require that solutions exist and are unique, but additional conditions are required. This additional condition is given by $f(0,y)\geq 0,\:\forall t\in{\bf{R}},\forall y\in{\bf{R}}_+$ and we note that this condition can be expressed in terms of the functions involved in the differential equation directly. Therefore, direct substitution in the involved expressions can be used to check these things. Also this theorem is directly related to geometric ODE theory, it basically formulates conditions for when the vector field is either directed into the positive cone or along the boundaries of it. No problems arise when it is directed into the positive cone and in the latter case uniqueness of solutions grants that no solutions escape from the positive cone.

Studies of nonlinear ordinary differential equations usually start from analysis of equilibria. All equilibria can be found by neglecting delays and therefore they coincide with the equilibria of the corresponding ODE
\begin{equation}
\dot{x}(t)=f(x(t),x(t)).
\label{general_scalar_ODE}
\end{equation}
We claim that we have found one equilibrium by solving $f(x,x)=0$, say $x(t)=\bar{x}$. The linearization of (\ref{general_scalar_autonomous}) around the equilibrium $\bar{x}$ is then given by
\begin{equation}
\dot{\xi}(t)=f_x(\bar{x},\bar{x})\xi(t)+f_y(\bar{x},\bar{x})\xi(t-r)=a\xi(t)+b\xi(t-r).
\label{linearization}
\end{equation}
We shall as usual look for exponential solutions around the zero solution of this equation and these are characterized by the characteristic equation
\begin{displaymath}
\lambda=a+be^{-\lambda r}.
\label{char_eq}
\end{displaymath}
The solutions of (\ref{char_eq}) classifies the solutions of (\ref{linearization}) into four cases according to the following theorem (Theorem 4.7 in Smith (2011)\nocite{smith_delays}):
\begin{Theorem}
The following hold for \em (\ref{linearization}). \em
\begin{itemize}
\item[\em(A)\em] If $a+b>0$, then $\xi=0$ is unstable
\item[\em(B)\em] If $a+b<0$, $b\geq a$, then $\xi=0$ is asymptotically stable regardless of the magnitude of the delay.
\item[\em(C)\em] If $a+b<0$, $b<a$, then there exist threshold value for the delay $r_\ast$ (which can be explicitly computed) such that $\xi=0$ is asymptotically stable for $0<r<r_\ast$ and unstable for $r>r_\ast$.
\item[\em(D)\em] If $a+b=0$, then $\lambda=0$ is a root of \em (\ref{char_eq}).
\end{itemize}
\label{classification_theorem}
\end{Theorem}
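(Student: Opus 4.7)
The plan is to treat each of the four cases separately by analyzing the location of the roots of the characteristic equation $\lambda = a + be^{-\lambda r}$ in the complex plane. Case (D) is immediate: substituting $\lambda = 0$ gives $0 = a + b$, which is exactly the hypothesis. For case (A), I would look at the real-valued function $g(\lambda) = \lambda - a - be^{-\lambda r}$ and observe that $g(0) = -(a+b) < 0$ while $g(\lambda) \to +\infty$ as $\lambda \to +\infty$, so the intermediate value theorem supplies a strictly positive real root and hence a growing exponential solution of the linearization.

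For cases (B) and (C), I would set up a continuation argument in the delay parameter $r$. At $r = 0$ the characteristic equation degenerates to $\lambda = a + b < 0$, so the unique root is in the open left half plane. A standard quasipolynomial estimate gives a bound on $|\mathrm{Im}(\lambda)|$ in terms of $\mathrm{Re}(\lambda)$, so only finitely many roots lie in any right half plane $\{\mathrm{Re}(\lambda) \geq -M\}$ and no root can enter the right half plane ``from infinity'' as $r$ changes continuously. Consequently the number of roots with $\mathrm{Re}(\lambda) \geq 0$ can change only when some root crosses the imaginary axis.

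Setting $\lambda = i\omega$ and separating real and imaginary parts gives $a + b\cos(\omega r) = 0$ and $\omega + b\sin(\omega r) = 0$, whose squaring and adding collapse to the necessary condition $\omega^2 = b^2 - a^2$; the degenerate case $\omega = 0$ forces $a + b = 0$. In case (B) a short case analysis on the sign of $a$ shows that $a + b < 0$ together with $b \geq a$ forces $|b| \leq |a|$ and $a + b \neq 0$, so no pure imaginary root exists for any $r \geq 0$; the continuation then yields asymptotic stability regardless of $r$. In case (C) the hypotheses force $b < -|a|$, so $\omega = \sqrt{b^2 - a^2} > 0$ is well defined, and the smallest $r > 0$ for which $\cos(\omega r) = -a/b$ and $\sin(\omega r) = -\omega/b$ hold simultaneously is the explicit threshold $r_*$.

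The remaining point for (C) is a transversality check. Implicit differentiation of the characteristic equation gives
\begin{displaymath}
\frac{d\lambda}{dr} = -\frac{\lambda(\lambda - a)}{1 + (\lambda - a)r},
\end{displaymath}
and evaluating at $\lambda = i\omega$ yields after rationalization
\begin{displaymath}
\mathrm{Re}\,\frac{d\lambda}{dr}\bigg|_{\lambda = i\omega} = \frac{\omega^2}{(1 - ar)^2 + \omega^2 r^2} > 0,
\end{displaymath}
so the crossing is strictly from left to right as $r$ increases through $r_*$. Combined with the continuation argument this gives stability for $0 < r < r_*$ and instability for $r > r_*$. The main obstacle I anticipate is making the continuation step genuinely rigorous, in particular excluding roots entering the closed right half plane from infinity; this is the one ingredient that relies on an analytic estimate for quasipolynomials rather than on elementary algebra.
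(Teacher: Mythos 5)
The paper does not actually prove this theorem: it is quoted verbatim as Theorem 4.7 of Smith (2011), so there is no in-paper argument to compare against. Judged on its own, your proposal is the standard and correct proof of this classical result (essentially the Hayes/D-subdivision analysis, which is also how the cited reference proceeds). The individual steps check out: (D) is immediate; in (A) the intermediate value theorem applied to $g(\lambda)=\lambda-a-be^{-\lambda r}$ with $g(0)=-(a+b)<0$ does produce a positive real root; the crossing condition $\omega^2=b^2-a^2$ is right; the case analysis showing that (B) forces $|b|\leq|a|$ with the boundary case $b=a$ ruled out by $a+b\neq 0$ (note (B) can only occur with $a<0$) is correct; and your transversality computation $\mathrm{Re}\,(d\lambda/dr)|_{\lambda=i\omega}=\omega^2/((1-ar)^2+\omega^2r^2)>0$ is verified, with the denominator nonzero since $\omega r\neq 0$. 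Two small points deserve explicit mention if you write this up: first, the a priori bound excluding roots entering the right half plane from infinity (which you correctly flag as the one genuinely analytic ingredient) follows from the elementary estimate $\mathrm{Re}\,\lambda\leq a+|b|e^{-r\,\mathrm{Re}\lambda}$, which bounds $\mathrm{Re}\,\lambda$ above uniformly and shows only finitely many roots lie in any half plane $\mathrm{Re}\,\lambda\geq -M$; second, instability for \emph{all} $r>r_*$ in case (C), not just near $r_*$, follows because the transversality sign is positive at \emph{every} crossing, so the count of roots in the open right half plane is nondecreasing in $r$ and is at least one past the first crossing. With those two remarks made explicit, the argument is complete.
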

In case (D) the linearization (\ref{linearization}) does not provide the qualitative information requested regarding solutions of the original nonlinear equation (\ref{general_scalar_autonomous}) in the vicinity of its equilibrium $\bar{x}$. However, center manifold theory (Guckheimer and Holmes (1983)\nocite{guck}) exist in the delay differential equation case, too, see Diekmann, van Gils, Verduyn Lunel, and Walther (1995)\nocite{Diekmann.Delay.1995}.

In ODE theory there exists a standard method for constructing Lyapunov functions for linear equations when all characteristic roots have negative real part, see Jordan and Smith (1990)\nocite{jordan}. There are extensions of this method to delay equations, see Hale and Lunel (1993)\nocite{delays}, but in most cases explicit constructions can be very complicated already in simple cases and most results exist just in the case (B) above. To get Lyapunov functions that are valid for the nonlinear equations like (\ref{general_scalar_autonomous}), such results still usually need to be modified.

The method of Lyapunov functions and LaSalle's (1960)\nocite{LaSalle.IRETCT:7} invariance principle is always the most desirable when proving global stability. However, more straightforward methods might exist. We start by mentioning the following simple comparison theorem (Smith (2011)\nocite{smith_delays}, Theorem 3.6). Essentially, it states that if a solution start in front of (behind) another and grows faster (slower), it will stay in front of (behind) the other.
\begin{Theorem}
Consider \em (\ref{general_scalar_autonomous}) \em and suppose that $f$ and $f_x$  are continuous on ${\bf{R}}^3$ and that the initial condition $\phi$ is continuous on $[-r,0]$. Let $x(t)$ be a solution of \em (\ref{general_scalar_autonomous}) \em with $x(t)=\phi(t)$ for $[-r,0]$ on some interval $[0,\hat{s}]$, $\hat{s}>0$. Let $\tilde{x}(t)$ be continuous on $[-r,0]\cup [0,\hat{s}]$, differentiable on $[0,\hat{s}]$ and satisfy
\begin{eqnarray*}
\dot{\tilde{x}}(t)&\geq (\leq)&f(\tilde{x}(t),\tilde{x}(t-r)), \: t\in [0,\hat{s}],\\
\dot{\tilde{x}}(t)&\geq (\leq)&\phi(t), -r\leq t\leq 0.
\end{eqnarray*}
Then $\tilde{x}(t)\geq (\leq) x(t)$.
\end{Theorem}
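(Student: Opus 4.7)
The plan is to prove the $\geq$ version; the $\leq$ version is symmetric. The approach is the \emph{method of steps}: induct over the intervals $[kr,(k+1)r]$, at each stage reducing the delay differential inequality to a scalar non-autonomous ODE inequality on a single step and applying the classical ODE comparison theorem. Set $w(t)=\tilde x(t)-x(t)$; by hypothesis $w(t)\geq 0$ on $[-r,0]$, and the goal is to extend this to $[0,\hat s]$.

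On $[0,\min(r,\hat s)]$ the delayed arguments $x(t-r)=\phi(t-r)$ and $\tilde x(t-r)$ are both prescribed continuous functions of $t$, so $x$ solves the ODE $\dot x=F(t,x)$ with $F(t,y):=f(y,\phi(t-r))$ locally Lipschitz in $y$ (because $f_x$ is continuous), while $\tilde x$ satisfies $\dot{\tilde x}\geq f(\tilde x,\tilde x(t-r))$. Monotonicity of $f$ in its delayed argument on the range of the solutions---the quasi-monotonicity condition standard for scalar DDE comparison and satisfied by the Holling response $f(s)=as/(1+bs)$ used later in the paper---combined with $\tilde x(t-r)\geq\phi(t-r)$ yields $\dot{\tilde x}\geq F(t,\tilde x)$. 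Since $\tilde x(0)\geq\phi(0)=x(0)$, the scalar ODE comparison theorem delivers $\tilde x(t)\geq x(t)$ on $[0,\min(r,\hat s)]$.

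The inductive step is then routine. Assuming $\tilde x\geq x$ on $[0,kr]\cap[0,\hat s]$, on $[kr,(k+1)r]\cap[0,\hat s]$ the histories $x(t-r)$ and $\tilde x(t-r)$ are again continuous known functions of $t$ satisfying $\tilde x(t-r)\geq x(t-r)$, and the same ODE comparison argument extends the inequality to $[0,(k+1)r]\cap[0,\hat s]$. After $\lceil\hat s/r\rceil$ steps the whole of $[0,\hat s]$ is covered.

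The main obstacle is the monotonicity step, namely replacing $f(\tilde x,\tilde x(t-r))$ by $f(\tilde x,\phi(t-r))$, and subsequently by $f(\tilde x,x(t-r))$, in the successive steps. Without the non-decreasing hypothesis on $f$ in its second argument this reduction fails, and one must instead attack $w=\tilde x-x$ directly via a Gronwall estimate that requires a Lipschitz bound on $f$ in both arguments together with a perturbation of the inequality to rule out a first crossing time. For the chemostat applications the monotonicity is automatic, so the clean route is simply to invoke Smith (2011), Theorem 3.6.
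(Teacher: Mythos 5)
The paper never proves this statement: it is quoted (with transcription errors) from Smith (2011), Theorem 3.6, so there is no in-house argument to compare yours against, and your closing suggestion to ``simply invoke Smith'' is in fact all the paper does. The substantive problem with your proof is the one you half-acknowledge: the method-of-steps reduction only works because you smuggle in the hypothesis $f_y\geq 0$, which is \emph{not} among the assumptions of the theorem (only continuity of $f$ and $f_x$ is assumed). Your fallback claim that the non-monotone case can be rescued by a Gronwall estimate on $w=\tilde x-x$ cannot be carried out, because with the differential inequality written as $\dot{\tilde x}(t)\geq f(\tilde x(t),\tilde x(t-r))$ the statement is false without monotonicity in the delayed argument. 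Take $f(x,y)=-y$, $r=1$, $\phi\equiv 0$, so $x\equiv 0$; let $\tilde x\equiv\varepsilon>0$ on $[-1,0]$, $\tilde x(t)=\varepsilon(1-t)$ on $[0,1]$, and $\tilde x(t)=\varepsilon\bigl((t-2)^2-1\bigr)/2$ on $[1,2]$. Then $\dot{\tilde x}(t)=-\tilde x(t-1)$ holds with equality on $[0,2]$ and $\tilde x\geq\phi$ on $[-1,0]$, yet $\tilde x(2)=-\varepsilon/2<0=x(2)$. (This is just the familiar fact that $\dot u=-u(t-1)$ is not order-preserving.)

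The resolution is that in Smith's actual Theorem 3.6 the delayed slot of the differential inequality is occupied by the \emph{solution}, i.e.\ the hypothesis is $\dot{\tilde x}(t)\geq f(\tilde x(t),x(t-r))$, and the initial comparison should read $\tilde x(t)\geq\phi(t)$ rather than $\dot{\tilde x}(t)\geq\phi(t)$; both are garbled in the paper's transcription. In that corrected form the theorem needs no monotonicity at all: on each step $[kr,(k+1)r]$ both $x$ and $\tilde x$ are compared against the same known non-autonomous scalar field $F(t,u)=f(u,x(t-r))$, and the classical ODE comparison theorem finishes the job. What your argument actually establishes, correctly, is the order-preservation result under $f_y\geq 0$ --- essentially Theorem 3 of the paper (Smith's Theorem 5.9) --- which in Smith's development is \emph{deduced from} Theorem 3.6, not a route to proving it. So either restate the theorem with $x(t-r)$ in the inequality and drop the monotonicity, or admit $f_y\geq 0$ as an explicit hypothesis and present your proof as a proof of Theorem 3 instead.
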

We continue formulating conditions that preserve the order of two solutions as follows. It is Theorem 5.9 in Smith (2011)\nocite{smith_delays}.
\begin{Theorem}
Consider \em (\ref{general_scalar_autonomous}). \em Assume that $f:{\bf{R}}^2\rightarrow {\bf{R}}$, $f_x$, $f_y$ are continuous, $f_y(x,y)\geq 0$, and that $x_1(t)$ and $x_2(t)$ are two solutions of \em (\ref{general_scalar_autonomous}) \em defined on $[-r,\hat{s}]$, for some $\hat{s}$. If $x_1(t)\leq x_2(t)$ for $t\in [-r,0]$, then $x_1(t)\leq x_2(t)$ for $t\in [-r,\hat{s}]$.
\label{monotone_Th}
\end{Theorem}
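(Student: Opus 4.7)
My plan is to prove the inequality by a first-crossing-time argument applied to a small strict perturbation of $x_2$, which avoids the usual difficulty that non-strict sign conditions do not propagate cleanly across the delay.

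First, for each $\epsilon>0$ I would define $y_\epsilon$ as the solution of the perturbed equation $\dot{y}(t)=f(y(t),y(t-r))+\epsilon$ with initial data $y(t)=x_2(t)+\epsilon$ on $[-r,0]$. Because $f,f_x$ (and hence $f$ as a whole, once $f_y$ is also continuous) are continuous, existence, uniqueness and continuous dependence on both the initial data and the right-hand side hold on $[-r,\hat{s}]$ via the method of steps, so $y_\epsilon\to x_2$ uniformly on $[-r,\hat{s}]$ as $\epsilon\to 0^+$. Thus it suffices to prove $y_\epsilon(t)>x_1(t)$ for every $t\in[-r,\hat{s}]$ and every $\epsilon>0$, and then take $\epsilon\to 0^+$.

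The strict inequality is clear on $[-r,0]$, where $y_\epsilon(t)=x_2(t)+\epsilon\geq x_1(t)+\epsilon>x_1(t)$. If it failed on $(0,\hat{s}]$, let $t_0\in(0,\hat{s}]$ be the first time $y_\epsilon(t_0)=x_1(t_0)$. Because $y_\epsilon-x_1>0$ on $[-r,t_0)$ and vanishes at $t_0$, one must have $\dot{y}_\epsilon(t_0)-\dot{x}_1(t_0)\leq 0$. On the other hand, since $y_\epsilon(t_0-r)\geq x_1(t_0-r)$ and $f_y\geq 0$ (this is the step where the monotonicity hypothesis is crucial),
\[
\dot{y}_\epsilon(t_0)=f(x_1(t_0),y_\epsilon(t_0-r))+\epsilon\geq f(x_1(t_0),x_1(t_0-r))+\epsilon=\dot{x}_1(t_0)+\epsilon>\dot{x}_1(t_0),
\]
a contradiction. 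Hence $y_\epsilon>x_1$ on $[-r,\hat{s}]$, and passing to the limit yields $x_2(t)\geq x_1(t)$ on $[-r,\hat{s}]$.

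The main obstacle is precisely the step carried out via the $\epsilon$-perturbation: without strict slack, a direct comparison between $x_1$ and $x_2$ would only give $\dot{y}_\epsilon(t_0)\geq\dot{x}_1(t_0)$ (no strict inequality), which is consistent with $\dot{y}_\epsilon(t_0)-\dot{x}_1(t_0)\leq 0$ and produces no contradiction. The monotonicity $f_y\geq 0$ is what lets the non-strict order at the delayed time $t_0-r$ be transported into an inequality of present derivatives, and the $+\epsilon$ then upgrades it to a strict one. A minor technical point is that the continuous dependence of $y_\epsilon$ on $\epsilon$ must be invoked uniformly on the compact interval $[-r,\hat{s}]$, which is standard from the method-of-steps together with the local Lipschitz continuity implied by continuity of $f_x$ and $f_y$.
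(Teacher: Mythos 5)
Your argument is correct. The $\epsilon$-perturbation of both the initial data and the right-hand side supplies exactly the strict slack needed at the first crossing time $t_0$: there $y_\epsilon(t_0)=x_1(t_0)$ forces $\frac{d}{dt}(y_\epsilon-x_1)(t_0)\leq 0$, while $f_y\geq 0$ together with $y_\epsilon(t_0-r)\geq x_1(t_0-r)$ and the extra $+\epsilon$ force that derivative to be at least $\epsilon>0$; the passage to the limit is legitimate by continuous dependence via the method of steps (and you correctly note that $y_\epsilon$ exists on all of $[-r,\hat{s}]$ for small $\epsilon$ because $x_2$ does). This is, however, a genuinely different route from the paper's, which offers no independent argument but declares the result ``a direct consequence of the previous theorem,'' i.e.\ of the quoted comparison theorem (Theorem 3.6 in Smith (2011)). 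That derivation proceeds stepwise on $[0,r]$, $[r,2r]$, \dots: on each step the ordering of the delayed arguments is already known from the previous step or from the initial data, and $f_y\geq 0$ converts it into the supersolution inequality $\dot{x}_2(t)=f(x_2(t),x_2(t-r))\geq f(x_2(t),x_1(t-r))$ that the comparison theorem requires (note that this reading needs Smith's original formulation, in which the delayed argument in the differential inequality is $x(t-r)$ rather than $\tilde{x}(t-r)$; as literally transcribed in the paper the hypothesis would be vacuous and the reduction would not use $f_y\geq 0$ at all). The trade-off is clear: the paper's route is a short reduction to an already-stated black box and exhibits the monotonicity hypothesis as precisely the supersolution condition, whereas yours is self-contained and in effect re-proves the comparison machinery from scratch, which is valuable if one does not wish to lean on the exact statement of the quoted theorem.
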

This theorem is a direct consequence of the previous theorem and makes it possible to translate many of the properties of the equilibria of the scalar ordinary differential equation (\ref{general_scalar_ODE}) to (\ref{general_scalar_autonomous}). The following theorem contains the parts of Corollary 5.11 in Smith (2011)\nocite{smith_delays} that we shall need.
\begin{Theorem}
Let $\bar{x}\in(a,b)$ be an equilibrium of \em (\ref{general_scalar_ODE}), \em and hence of \em (\ref{general_scalar_autonomous}). \em Suppose that the sign-condition
\begin{displaymath}
(x-\bar{x})f(x,x)<0,\:\: x\in [a,b],\:\: x\neq \bar{x}
\end{displaymath}
holds. If $\phi(s)\in[a,b]$, $s\in[-r,0]$, then
\begin{displaymath}
\lim_{t\rightarrow\infty}=\bar{x}.
\end{displaymath}
\label{monotone_scalar_eq_thm}
\end{Theorem}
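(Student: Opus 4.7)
The strategy is to trap $x(t,\phi)$ between two monotone solutions emanating from the constant initial data $\phi_1\equiv a$ and $\phi_2\equiv b$, and then pass to the limit using the fact that $\bar{x}$ is the only zero of $x\mapsto f(x,x)$ on $[a,b]$. Throughout I would assume, as is implicit in the appeal to Theorem \ref{monotone_Th}, that $f_y\ge 0$; without this hypothesis the corollary cannot follow from the comparison machinery just developed.

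First I would introduce $x_1(t)$ and $x_2(t)$ as the solutions of (\ref{general_scalar_autonomous}) corresponding to $\phi_1\equiv a$ and $\phi_2\equiv b$. Because $\bar{x}\in(a,b)$ the sign condition yields $f(a,a)>0$ and $f(b,b)<0$, so the constant $a$ is a sub-solution and $b$ is a super-solution of the equation; the comparison theorem (Smith 3.6, stated just before Theorem \ref{monotone_Th}) then forces $x_1(t)\ge a$ and $x_2(t)\le b$ for every $t\ge 0$. Since $\phi_1\le\phi\le\phi_2$ on $[-r,0]$, Theorem \ref{monotone_Th} delivers the sandwich $x_1(t)\le x(t,\phi)\le x_2(t)$, which in particular ensures global existence of all three solutions inside $[a,b]$. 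Monotonicity of $x_1$ is obtained by the standard shift trick: for any $h>0$ the function $t\mapsto x_1(t+h)$ is again a solution by autonomy, and on $[-r,0]$ it equals $a=\phi_1(s)$ wherever $s+h\le 0$ and equals $x_1(s+h)\ge a$ on the remaining subinterval, so it dominates $\phi_1$; Theorem \ref{monotone_Th} then propagates $x_1(t+h)\ge x_1(t)$ to all $t\ge 0$. An analogous argument shows $x_2$ is non-increasing, and combined with the bounds in $[a,b]$ this gives $x_1(t)\nearrow L_1$ and $x_2(t)\searrow L_2$ for some limits $L_1,L_2\in[a,b]$.

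Finally I would identify $L_1=L_2=\bar{x}$. Passing to the limit in $\dot{x}_1(t)=f(x_1(t),x_1(t-r))$ and using continuity of $f$ gives $\dot{x}_1(t)\to f(L_1,L_1)$; were this limit nonzero, $x_1$ could not converge to a finite value, so $f(L_1,L_1)=0$ and the sign condition leaves $L_1=\bar{x}$ as the only possibility. The same reasoning yields $L_2=\bar{x}$, and the sandwich $x_1(t)\le x(t,\phi)\le x_2(t)$ finishes the argument. The main technical obstacle is the monotonicity step, which hinges on correctly reinterpreting the shifted trajectory as a new solution whose initial segment dominates the original; once this is in hand, both the confinement to $[a,b]$ and the identification of the limits are clean scalar continuity arguments.
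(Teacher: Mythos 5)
The paper offers no proof of this theorem: it is quoted verbatim as (part of) Corollary 5.11 of Smith (2011), so there is no internal argument to compare against. Your proposal is correct and is essentially the standard proof given in that reference: squeeze $x(t,\phi)$ between the solutions issuing from the constant data $a$ and $b$, use the time-shift trick together with the order-preservation theorem (Theorem \ref{monotone_Th}) to make those two solutions monotone, and identify their common limit as $\bar{x}$ via $f(L,L)=0$ and the sign condition. You are also right that the hypothesis $f_y\geq 0$ is needed and is only implicit in the paper's statement; without it the appeal to Theorem \ref{monotone_Th} (and hence the whole sandwich) collapses, and in the paper's application to (\ref{hyperbolic}) this hypothesis is verified explicitly. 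The only step worth tightening is the passage ``$\dot{x}_1(t)\to f(L_1,L_1)$, hence $f(L_1,L_1)=0$'': spell out that a monotone bounded function whose derivative tends to a nonzero constant would be unbounded, which is exactly the contradiction you intend.
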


\section{A delayed hyperbolic model}

A relation between the single species chemostat model and the logistic model exists, see e. g. Kooi, Boer, Kooijman (1998)\nocite{Kooi.BoMB:60} and this relation can be made visible also in the delayed case. More precisely, the logistic equation can be considered as an approximation of the mass-balance equations used in the chemostat. Let us consider the functional
\begin{equation}
V(t)=x(t)+me^{-r}s(t-r)-me^{-r}.
\label{lyap_functional}
\end{equation}
Along the solutions of (\ref{delay_chemostat_dim_less}), we identify that equation for the total derivative is the ordinary differential equation ${\dot{V}}(t)=-V(t)$ and thus, $\lim_{t\rightarrow\infty}V(t)=0$. We ask what models we could obtain by considering the delayed chemostat in the hyperplane $V=0$ in $C([-r,0],{\bf{R}}^2)$. It is not evident that such a procedure will preserve the dynamics even in the finite dimensional case (Thieme (1992)\nocite{JoMB.Thieme:30}) but we try this substitution and get
\begin{equation}
\dot{x}(t)=me^{-r}f\left(\frac{me^{-r}-x(t)}{me^{-r}}\right)x(t-r)-x(t).
\label{hyperbolic}
\end{equation}
This is a single hyperbolic delayed equation. Our program is for the moment to verify that this equation has the same qualitative dynamical properties as the single species chemostat model. Indeed, we prove the following sequence of results.
\begin{lemma}
Solutions of \em (\ref{hyperbolic}) \em remain non-negative and bounded above by $x(t)<me^{-r}$.
\label{pos_bounded}
\end{lemma}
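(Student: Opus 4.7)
The plan is to separate the two claims and attack them with the two standard tools described in the Methods section: the sign condition for non-negativity and the comparison theorem (Smith, Theorem 3.6) for the upper bound, followed by a short strict-inequality argument at the boundary.

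Denote the right-hand side of (\ref{hyperbolic}) by
\[
F(x,y)=me^{-r}f\!\left(\frac{me^{-r}-x}{me^{-r}}\right)y-x,
\]
and view (\ref{hyperbolic}) as an equation of the form (\ref{general_scalar_autonomous}) with right-hand side $F$. I would first note that $F$ and $F_x$ are continuous on the slab $\{x<me^{-r}\}\times\mathbf{R}$, so that the method of steps provides a unique local solution. Implicit in the statement is that the initial function $\phi$ is continuous with $0\le\phi(t)\le me^{-r}$ on $[-r,0]$ and $\phi(0)>0$, which makes the substitution that produced (\ref{hyperbolic}) consistent with $s\ge 0$.

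For non-negativity I would invoke the standard positivity criterion stated in the Methods section: it suffices to verify $F(0,y)\ge 0$ for all $y\ge 0$. A direct substitution gives $F(0,y)=me^{-r}f(1)y=\frac{ame^{-r}}{1+b}\,y\ge 0$ whenever $y\ge 0$, so on any interval where the solution exists and the initial condition is non-negative, $x(t)\ge 0$.

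For the upper bound I would use the constant function $\tilde{x}(t)\equiv me^{-r}$ as a super-solution and apply the comparison theorem (Smith, Theorem 3.6). Indeed, $\dot{\tilde{x}}(t)=0$ while
\[
F(\tilde{x}(t),\tilde{x}(t-r))=me^{-r}f(0)\cdot me^{-r}-me^{-r}=-me^{-r},
\]
so $\dot{\tilde{x}}(t)\ge F(\tilde{x}(t),\tilde{x}(t-r))$, and $\tilde{x}(t)\ge \phi(t)$ on $[-r,0]$ by assumption. The comparison theorem then yields $x(t)\le me^{-r}$ on the interval of existence. Because this bound keeps the argument of $f$ in $[0,1]$, the solution cannot blow up and therefore extends to all $t\ge 0$.

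The delicate point — and the only one I expect to require care — is upgrading this to the strict inequality $x(t)<me^{-r}$. I would argue by contradiction: if there were a first time $t^{\ast}>0$ at which $x(t^{\ast})=me^{-r}$, then since $x(t)\le me^{-r}$ for $t\le t^{\ast}$ one must have $\dot{x}(t^{\ast})\ge 0$. However, plugging in directly gives $\dot{x}(t^{\ast})=me^{-r}f(0)x(t^{\ast}-r)-me^{-r}=-me^{-r}<0$, a contradiction. Hence $x(t)<me^{-r}$ for all $t\ge 0$, completing the proof. The main obstacle, if any, is purely bookkeeping at the boundary of the domain, since the geometry of the vector field at $x=me^{-r}$ does the real work.
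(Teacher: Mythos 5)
Your proof is correct and follows essentially the same route as the paper: non-negativity via the criterion $F(0,y)=me^{-r}f(1)y\geq 0$, and the upper bound from the fact that the vector field points strictly downward at $x=me^{-r}$ (the paper phrases this as the differential inequality $\dot{x}(t)\leq -x(t)$ for $x(t)\geq me^{-r}$, you as a constant super-solution plus a first-crossing argument). The only minor difference is that your version needs $\phi\leq me^{-r}$ on $[-r,0]$, whereas the paper's exponential-decay estimate also disposes of initial data above that level; on the other hand your handling of the strict inequality is more careful than the paper's, which essentially just asserts it.
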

\begin{proof}
We start be proving that the solutions remain non-negative. This follows from the fact that the estimate
\begin{displaymath}
me^{-r}f(1)x(t-r)\geq 0
\end{displaymath}
holds for all positive $x(t-r)$. Theorem 3.4 in Smith (2011)\nocite{smith_delays} gives the result. Comparison with
\begin{displaymath}
\dot{x}(t)\leq -x(t)
\end{displaymath}
for $x(t)\geq me^{-r}$ shows that large positive solutions decay at least exponentially since $f(s)<0$ for $s<0$. It follows that positive solutions are bounded above by $x(t)<me^{-r}$.
\end{proof}
\begin{lemma}
The equation \em(\ref{hyperbolic}) \em has the washout state $x=0$ as its unique non-negative equilibrium if $f(1)<\frac{e^{r}}{m}$. This equilibrium is then locally stable. If $f(1)>\frac{e^{r}}{m}$, then \em(\ref{hyperbolic}) \em has two non-negative equilibria, the washout state that is locally unstable and the survival state that is locally stable.
\end{lemma}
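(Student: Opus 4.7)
The plan is to locate equilibria by solving $F(\bar{x},\bar{x})=0$ where $F(x,y)=me^{-r}f((me^{-r}-x)/(me^{-r}))y-x$ is the right-hand side of (\ref{hyperbolic}), and then apply Theorem~\ref{classification_theorem} to the linearization at each one. Writing $u(x)=(me^{-r}-x)/(me^{-r})$, the equation factors as $\bar{x}[me^{-r}f(u(\bar{x}))-1]=0$, which gives the washout root $\bar{x}=0$ together with any positive solution of $f(u(\bar{x}))=e^r/m$. Since $f$ is strictly increasing with $f(0)=0$, the map $\bar{x}\mapsto f(u(\bar{x}))$ is strictly decreasing on $[0,me^{-r}]$ from $f(1)$ down to $0$, so by the intermediate value theorem a unique survival equilibrium $\bar{x}\in(0,me^{-r})$ exists if and only if $f(1)>e^r/m$.

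For linear stability I differentiate to obtain $F_x(x,y)=-f'(u(x))y-1$ (the chain rule providing a factor $-e^r/m$ that cancels the outer $me^{-r}$) and $F_y(x,y)=me^{-r}f(u(x))$. At the washout state this yields $a=-1$ and $b=me^{-r}f(1)$, so $a+b=me^{-r}f(1)-1<0$ precisely when $f(1)<e^r/m$; one also has $b\geq 0>-1=a$, so case~(B) of Theorem~\ref{classification_theorem} applies and $\bar{x}=0$ is asymptotically stable for every $r$. When $f(1)>e^r/m$ the sign of $a+b$ reverses and case~(A) gives instability.

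At the survival state the defining identity $me^{-r}f(u(\bar{x}))=1$ immediately gives $b=1$, while $a=-1-f'(u(\bar{x}))\bar{x}<-1$ since $f'>0$ and $\bar{x}>0$. Consequently $a+b=-f'(u(\bar{x}))\bar{x}<0$ and $b-a=2+f'(u(\bar{x}))\bar{x}>0$, so case~(B) of Theorem~\ref{classification_theorem} again applies and the survival state is asymptotically stable regardless of the delay.

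The only real subtlety is the chain-rule bookkeeping that collapses $me^{-r}\cdot f'(u)\cdot(-e^r/m)$ to $-f'(u)$; once that is carried out, both non-trivial cases land squarely in the delay-independent case~(B) of Theorem~\ref{classification_theorem}, so no Hopf-type phenomenon appears at this level and any qualitative difference between (\ref{hyperbolic}) and its no-delay counterpart will have to be detected by a finer tool than linearization.
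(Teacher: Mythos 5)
Your proof is correct, and it follows the same basic route as the paper up to the linearization: you factor the equilibrium condition as $\bar{x}\bigl[me^{-r}f(u(\bar{x}))-1\bigr]=0$, use the monotonicity of $\bar{x}\mapsto f(u(\bar{x}))$ on $[0,me^{-r}]$ to get existence and uniqueness of the survival state exactly when $f(1)>e^r/m$, and your coefficients $a=-1-f'(u(\bar{x}))\bar{x}$, $b=me^{-r}f(u(\bar{x}))$ reproduce precisely the characteristic equation $\lambda+1+xf'(s)-e^{-\lambda r}me^{-r}f(s)=0$ that the paper writes down. Where you genuinely diverge is in how the stability verdict is extracted. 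The paper identifies this characteristic equation with the second factor of the characteristic equation of the full delayed chemostat (Smith 2011, p.~55) and imports the known stability conclusions from there; you instead close the argument internally by checking the sign conditions of Theorem~\ref{classification_theorem} at each equilibrium: $a+b=me^{-r}f(1)-1$ with $b\geq 0>-1=a$ at washout, and $b=1$, $a+b=-f'(u(\bar{x}))\bar{x}<0$, $b>a$ at survival. Your version is more self-contained, and it makes explicit a fact the paper's citation-based argument leaves implicit but later relies on: both stable equilibria fall into the delay-independent case (B), so no Hopf-type instability can arise here for any $r$. One shared, harmless caveat: with $f(s)=as/(1+bs)$ the map $f$ has a pole at $s=-1/b$ and is positive again beyond it, so the claim that the listed equilibria are the only non-negative ones tacitly restricts attention to $0\leq x\leq me^{-r}$ (equivalently $s\geq 0$), the region singled out by Lemma~\ref{pos_bounded} and the only one where the reduction $V=0$ is meaningful; the paper's proof makes the same implicit restriction, so this is not a gap specific to your argument.
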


\begin{proof}
By neglecting delays, we get that the washout state is an equilibrium. For the survival state we solve
\begin{displaymath}
f\left(\frac{me^{-r}-x(t)}{me^{-r}}\right)=\frac{e^r}{m}
\end{displaymath}
and conclude that this equation has a unique positive solution $x(t)=\bar{x}$ if and only if $f(1)>\frac{e^{r}}{m}$. In order to investigate the local stability of these solutions, we compute the characteristic equation of a generic equilibrium of the equation (\ref{hyperbolic}). It takes the form
\begin{displaymath}
\lambda+1+xf^\prime(s)-e^{-\lambda r}me^{-r}f(s)=0
\end{displaymath}
that turns out to be the second factor of the characteristic equation of the delayed single species chemostat (\ref{delay_chemostat_dim_less}), see Smith (2011, p55)\nocite{smith_delays}. The factorization of this equation given there is therefore, not a coincidence. It is closely related to motion in the hyperplane $V=0$ defined by (\ref{lyap_functional}) and motion towards it. Hence, we have proved that the local stability properties of the delayed single species chemostat (\ref{delay_chemostat_dim_less}) and the hyperbolic model (\ref{hyperbolic}) are the same.
\end{proof}

We go on formulating the global properties of (\ref{hyperbolic}) and remark that we use the monotone dynamics of it in order to arrive in the conclusions. This is a simplification in comparison to proving the global stability properties of the chemostat equations (\ref{delay_chemostat_dim_less}) directly, since the first equation does not satisfy the quasimonotone condition in Smith (2005)\nocite{Smith.monotone}.
\begin{Theorem}
The following statements hold for the solutions of \em (\ref{hyperbolic})\em.
\begin{itemize}
\item[\em (i)\em] If $f(1)<\frac{e^{r}}{m}$, then the washout equilibrium is the unique non-negative equilibrium of \em (\ref{hyperbolic}) \em and it attracts all non-negative solutions.
\item[\em (ii)\em] If $f(1)>\frac{e^{r}}{m}$, then the survival equilibrium exists for \em (\ref{hyperbolic}) \em and it attracts all positive solutions.
\end{itemize}
\end{Theorem}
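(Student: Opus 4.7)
The plan is to derive both parts from Theorem \ref{monotone_scalar_eq_thm}, after setting up the scalar equation (\ref{hyperbolic}) as monotone. Writing its right-hand side as
\[F(x,y)=me^{-r}f\!\left(\tfrac{me^{-r}-x}{me^{-r}}\right)y-x,\]
I first check that $F_y=me^{-r}f\!\left(\tfrac{me^{-r}-x}{me^{-r}}\right)\geq 0$ on the invariant strip $[0,me^{-r}]$ supplied by Lemma \ref{pos_bounded}, so Theorem \ref{monotone_Th} applies and solutions of (\ref{hyperbolic}) are monotone in their initial data. Along the diagonal,
\[F(x,x)=x\left[me^{-r}f\!\left(\tfrac{me^{-r}-x}{me^{-r}}\right)-1\right],\]
and the bracket is strictly decreasing in $x$, taking the value $me^{-r}f(1)-1$ at $x=0$ and $-1$ at $x=me^{-r}$.

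Case (i) is essentially immediate. If $me^{-r}f(1)<1$, the bracket is strictly negative throughout $[0,me^{-r}]$, so the sign condition $(x-0)F(x,x)<0$ holds on $(0,me^{-r}]$; by continuity it extends to an interval $[-\epsilon,me^{-r}]$ containing $0$ in its interior. Since any non-negative solution eventually lies in $[0,me^{-r}]$ by Lemma \ref{pos_bounded}, Theorem \ref{monotone_scalar_eq_thm} applied with $\bar x=0$ yields $x(t)\to 0$.

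Case (ii) is more delicate. There is now a unique $\bar x\in(0,me^{-r})$ with $F(\bar x,\bar x)=0$, and the sign condition $(x-\bar x)F(x,x)<0$ holds on $(0,me^{-r})\setminus\{\bar x\}$. To invoke Theorem \ref{monotone_scalar_eq_thm} one needs $0<a<\bar x<b<me^{-r}$ together with the solution segment eventually lying in $[a,b]$. The upper bound $b<me^{-r}$ follows from a small refinement of Lemma \ref{pos_bounded}: if $x(t_n)\to me^{-r}$ along some sequence, then the argument of $f$ tends to $0$ and $\dot x(t_n)\to -me^{-r}<0$, ruling out such an asymptotic approach. The main obstacle is the lower bound, namely uniform persistence of positive solutions. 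I would argue strict positivity $x(t)>0$ for all $t\geq 0$ by freezing the delayed term on successive intervals of length $r$ and using variation of constants (on $[0,r]$ this already gives $x(t)\geq x(0)e^{-t}$), then pick $c\in(0,\bar x)$ and $t_0\geq r$ with $x(t)\geq c$ on $[t_0-r,t_0]$. The constant-$c$ solution $y_c$ satisfies $y_c(t)\to\bar x$ by Theorem \ref{monotone_scalar_eq_thm} applied on $[c,b]$, whence the monotonicity Theorem \ref{monotone_Th} gives $x(t,\phi)\geq y_c(t-t_0)$ and hence $\liminf x(t,\phi)\geq\bar x$. A symmetric comparison from above with the constant-$b$ solution yields $\limsup x(t,\phi)\leq\bar x$, and the two bounds together complete the proof.
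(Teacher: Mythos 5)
Your argument is correct, and it diverges from the paper's proof in part (i) while essentially reproducing it in part (ii). For (i) the paper does not use the sign condition at all: it bounds solutions above by the linear delay equation $\dot z(t)=me^{-r}f(1)z(t-r)-z(t)$, whose zero solution is asymptotically stable independently of delay by case (B) of Theorem \ref{classification_theorem}, and concludes by comparison. Your route instead applies Theorem \ref{monotone_scalar_eq_thm} with $\bar x=0$ after widening the interval to $[-\epsilon,me^{-r}]$ so that the equilibrium is interior; this is valid (the bracket is negative near $x=0$, so $(x-0)F(x,x)>0$ for small negative $x$), and it has the virtue of treating both cases by one mechanism, though it leans on the monotonicity hypothesis $F_y\geq 0$ that the linear-comparison argument does not need. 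For (ii) you and the paper use the same ingredients: strict positivity from $\dot x>-x$ to get a solution-dependent lower bound $A>0$ on a shifted initial interval, the bound $x(t)<me^{-r}$ from Lemma \ref{pos_bounded} for the upper end, quasimonotonicity on the strip, and the sign condition around $\bar x$. Two of your refinements are unnecessary: the explicit sandwich between the constant-$c$ and constant-$b$ solutions is exactly the proof of Theorem \ref{monotone_scalar_eq_thm} and can be replaced by a direct citation of that theorem on $[c,b]$, and the fuss about securing $b<me^{-r}$ strictly can be avoided because $F(x,x)=-me^{-r}<0$ at $x=me^{-r}$, so the sign condition already holds at (and slightly beyond) that endpoint. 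Neither redundancy affects correctness.
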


\begin{proof}
Consider (\ref{hyperbolic}). By Lemma \ref{pos_bounded}, we have $0\leq x(t)< me^{-r}$. With aid of the differential inequality
\begin{displaymath}
\dot{x}(t)\leq me^{-r}f(1)x(t-1)-x(t)
\end{displaymath}
we notice that no solutions can grow faster than the solutions of the linear delay differential equation
\begin{equation}
\dot{z}(t)=me^{-r}f(1)z(t-1)-z(t)
\label{comparison_eq}
\end{equation}
which has the characteristic equation $\lambda=-1+me^{-r}f(1)e^{-\lambda}$. Now
\begin{equation}
me^{-r}f(1)<1
\label{wash_out_stable}
\end{equation}
implies that the zero solution of (\ref{comparison_eq}) is asymptotically stable and falls into category (B) of Theorem \ref{classification_theorem}. Therefore, all solutions of (\ref{hyperbolic}) converge to zero if (\ref{wash_out_stable}) since they are bounded from above by solutions that decay to the zero solution.

Now assume $me^{-r}f(1)>1$ and remember that $x(0)>0$. The differential inequality $\dot{x}(t)>-x(t)$ gives $x(t)>0$. We can therefore, work with initial conditions satisfying $x(t)\geq A$, for some $A>0$ and $-r\leq t\leq 0$. Consider (\ref{hyperbolic}). It possesses monotone dynamics (or order-preserving dynamics, see Smith (1995)\nocite{Smith.monotone}) since $0\leq x(t)\leq me^{-r}$ gives
\begin{displaymath}
me^{-r}f\left(\frac{me^{-r}-x(t)}{me^{-r}}\right)\geq 0
\end{displaymath}
so that Theorem \ref{monotone_Th} can be applied. Consider the non-negative equilibria of (\ref{hyperbolic}) that are $0$ and $\bar{x}$ when $me^{-r}f(1)>1$. Both equilibria are shared with the ODE
\begin{displaymath}
\dot{x}(t)=\left(me^{-r}f\left(\frac{me^{-r}-x(t)}{me^{-r}}\right)-1\right)x(t).
\end{displaymath}
For the above ODE, the survival state is globally asymptotically stable on $x(t)\in(0,me^{-r})$. Theorem \ref{monotone_scalar_eq_thm} gives that $x(t)=\bar{x}$ attracts all solutions of (\ref{hyperbolic}) with initial conditions satisfying $A\leq x(t)\leq B$ with $A>0$ and $B<me^{-r}$. In the beginning of this proof and in the proof of Lemma \ref{pos_bounded}
we have justified that we can work with such initial conditions provided $x(0)>0$ and $x(t)\geq 0$ for $-r\leq t <0$.\end{proof}
Thus, the delayed hyperbolic model (\ref{hyperbolic}) preserves the dynamics of the delayed single species chemostat model (\ref{delay_chemostat_dim_less}). We conclude that $b=0$ gives $f(s)=as$ and therefore the delayed logistic model
\begin{equation}
\dot{x}(t)=ame^{-r}x(t-r)-x(t)-ax(t)x(t-r).
\label{chemo_logistic}
\end{equation}
shares the same dynamical properties, too. In particular, it has monotone dynamics when $0\leq x(t)\leq me^{-r}$. If we let the delay parameter tend to zero, this equation becomes the ordinary differential equation
\begin{displaymath}
\dot{x}(t)=(am-1)x(t)-ax^2(t)=(am-1)x(t)\left(1-\frac{x(t)}{\frac{am-1}{a}}\right).
\end{displaymath}
We can identify the growth rate and carrying capacity parameters as $am-1$ and $(am-1)/a$, respectively. The washout equilibrium is still given by $x(t)=0$ and the survival equilibrium is located at the carrying capacity $x(t)=(am-1)/a$.

\section{The logistic equation}

Delayed logistic equations are not always derived in a way that preserves the dynamical properties of the underlying chemostat equations. Consider, for instance, Hutchinson's (1948)\nocite{Hutchinson.Ann_NYSci:50} delayed logistic model
\begin{equation}
\dot{x}(t)=(am-1)x(t)\left(1-\frac{x(t-r)}{\frac{am-1}{a}}\right).
\label{Hutchinson_logistic}
\end{equation}
We remind the reader that the resource dynamics was not modeled explicitly in Hutchinson's paper and that the problems mentioned by Kooi, Boer, Kooijman (1998)\nocite{Kooi.BoMB:60} exist. This equation can be made comparable to (\ref{chemo_logistic}) by arranging it as
\begin{displaymath}
\dot{x}(t)=am\:\:\cdot\:\:x(t\:\:\:\:)-x(t)-ax(t)x(t-r).
\end{displaymath}
We note that the above equation has preserved the equilibria at $x(t)=0$ and $x(t)=(am-1)/a$. However, it does not have monotone (order-preserving) solutions, since 
\begin{displaymath}
-ax(t)\leq 0.
\end{displaymath}
We start by linearizing around a generic equilibrium $x$ and get the characteristic equation
\begin{displaymath}
\lambda=(am-1-ax)-e^{-r\lambda}ax.
\end{displaymath}
For $x(t)=0$, we have $\lambda=am-1$ and for $x(t)=(am-1)/a$, we have
\begin{displaymath}
\lambda=-e^{-r\lambda}(am-1)
\end{displaymath}
which according to Theorem \ref{classification_theorem} belong to the category (C) above: There exists a Hopf-bifurcation value for the magnitude of the delay, so that the equilibrium is stable if the delay is below that threshold and unstable otherwise. We have now several indications of the fact that the dynamics of (\ref{Hutchinson_logistic}) is considerably more complicated than the dynamics of (\ref{chemo_logistic}) and this is not the end of the story.

In order to investigate (\ref{Hutchinson_logistic}) further, we introduce $\tau=(am-1)t$, $\rho=(am-1)r$ and
\begin{displaymath}
\xi=\frac{x}{\frac{am-1}{a}}-1.
\end{displaymath}
We get
\begin{equation}
\frac{d\xi(\tau)}{d\tau}=-\xi(\tau-\rho)(1+\xi(t))
\label{wrights_eq}
\end{equation}
which is the famous Wright's (1955)\nocite{wright.JfRAM:194} equation. The following is known about (\ref{wrights_eq}). First, its solutions are bounded (Proposition 5.13 in Smith (2011)\nocite{smith_delays}). For $0\leq \rho\leq 1.5706$, every orbit tends to the survival state (Wright (1955)\nocite{wright.JfRAM:194} and B{\'{a}}nheley, Czendes, Kristin, and Neymaier (2014)\nocite{Banhelyi.SIAMJADS:29}). Note that $1.5706<\frac{\pi}{2}\approx 1.5707963268$ and that Wright's (1955)\nocite{wright.JfRAM:194} conjecture claims this to be true for all values of the delay in $0\leq \rho\leq\frac{\pi}{2}$. This problem has been open for almost 60 years now and other open problems connected with Wright's equation exist, too, cf B{\'{a}}nheley, Czendes, Kristin, and Neymaier (2014)\nocite{Banhelyi.SIAMJADS:29}) and references therein. We can therefore ask whether it is really a simplification to use logistic equations instead of the chemostat when modeling the growth rate of the lowest trophic level. The single-species delayed chemostat has a complete qualitative analysis. There exists at least one periodic solution for (\ref{wrights_eq}) when $\rho>\frac{\pi}{2}$.

\section{Summary}

In this paper we have studied a delayed model for the single species chemostat. The global dynamics of this model has been completely known since Ellermeyer (1994)\nocite{SIAMJAM.Ellermeyer} and new variants of such an analysis has been given after this (Smith(2011)\nocite{smith_delays}). All solutions tend to either the washout or the survival state and the local stability properties of those equilibria fully determines the global properties of the model.

We make a new interpretation of these results here. The results can be divided into motion towards a hyperplane in $C([-r,0],{\bf{R}}^2)$ and motion in the hyperplane governed by a scalar monotone delay differential equation. We name this equation, the hyperbolic model. Such a separation is not always valid even in the finite dimensional case (Thieme (1992)\nocite{JoMB.Thieme:30}). We verify that no qualitative differences occur between the chemostat equations and the hyperbolic model.

A limiting case of this hyperbolic model correspond to a delayed logistic model. We prove that the delayed logistic growth rate model encountered in this way does not correspond to the model that usually is referred to as the delayed logistic model in the literature (Hutchinson (1948)\nocite{Hutchinson.Ann_NYSci:50}). This model does not satisfy any monotonicity conditions (Smith (1995)\nocite{Smith.monotone}) and is equipped with a number of problems regarding its long-run dynamical behavior that has been open for a long time, see B{\'{a}}nheley, Czendes, Kristin, and Neymaier (2014)\nocite{Banhelyi.SIAMJADS:29}.

\bibliographystyle{abbrv}
\bibliography{artiklar,biologi}
\end{document}